\author{Giacomo Perri}
\date{}
\theoremstyle{plain}
\newtheorem*{mainthm*}{Main Theorem}
\theoremstyle{remark}
\theoremstyle{definition}
\DeclareMathOperator{\Ric}{Ric}
\newtheorem{Theorem}{Theorem}[section]
\newtheorem{Lemma}{Lemma}[section]
\newtheorem{Remark}{Remark}[section]
\newtheorem{Definition}{Definition}[section]
\newtheorem{Example}{Example}[section]
\newcommand{\bigslant}[2]{{\raisebox{.2em}{$#1$}\left/\raisebox{-.2em}{$#2$}\right.}}
\begin{document}

	\begin{center}
		{\Large\bf  Futaki invariant on Hopf manifolds}\\[5mm]
           {\large

			Giacomo Perri \footnote{The author is supported by the Sapere Aude project $\lq\lq$Conformal geometry: metrics and cohomology" at Aarhus University}

		}
\end{center}

\begin{abstract}
The Futaki invariant is a fundamental tool in Kähler geometry representing an obstruction to the existence of Kähler--Einstein metrics. Recently, it was generalized to compact complex manifolds. In this paper, we prove that it vanishes on Hopf manifolds.
\end{abstract}

\section{Introduction}

The Futaki invariant has been intensively studied in Kähler geometry, representing an obstruction to the existence of Kähler--Einstein metrics (see \citep{Futaki1983}). A non-Kähler counterpart has been defined in \citep{FHO} in the general Hermitian setting, as an obstruction to the existence of a volume form proportional to the determinant of its Ricci form. However, only few examples were studied and they are all linked to locally conformally Kähler (LCK) geometry, a type of Hermitian non-Kähler geometry, interesting from the conformal point of view. Namely, in \citep{FHO}, it was proven that the invariant vanishes on Vaisman manifolds. Moreover, an example is explicitly given in that paper on a blown-up Hopf surface, where the Futaki invariant does not vanish. This suggests that the Futaki invariant may be of particular interest in the context of LCK geometry. However, not every LCK manifold admits holomorphic vector fields; for instance, this is the case for Inoue surfaces and, more generally, for Oeljeklaus--Toma manifolds \citep{AIF_2005__55_1_161_0}. 

In \citep{ornea2024principleslocallyconformallykahler}, the authors suggest that vanishing results may hold within the class of LCK manifolds with potential. An LCK manifold with potential is an LCK manifold with Kähler cover admitting a global, positive, and automorphic Kähler potential. Hopf manifolds are LCK manifolds with potential \citep{Ornea_2023non-linear-Hopf}. Motivated by this, the aim of this paper is to prove that the Futaki invariant vanishes on any Hopf manifold in arbitrary dimension, regardless of carrying a Vaisman metric or not (Theorem \ref{vanishingonHopf}).

Hopf manifolds play a fundamental role in LCK geometry. They are defined as the quotient of \(\mathbb{C}^n \setminus \{0\}\) by a biholomorphic contraction centered at zero. In \citep{LocallyconformalKählermanifoldswithpotential}, it is shown that each LCK manifold with potential and complex dimension greater than two admits a holomorphic embedding into a linear Hopf manifold. 

In the next section, we provide the background tools necessary for the third section. This is entirely devoted to proving the vanishing of the Futaki invariant on Hopf manifolds. This is achieved by reducing the computation on a generic Hopf manifold to its corresponding diagonal Hopf manifold, which is Vaisman. This diagonal Hopf manifold is obtained by taking the linear diagonal part of the contraction defining the generic Hopf manifold, after a suitable biholomorphic change of coordinates.

\section{Preliminaries}

In this section, we briefly recall the essential ingredients needed to define the Futaki invariant in the context of Hermitian geometry and its geometric interpretation; for a complete treatment, we refer the reader to \citep{FHO}. We then provide a concise overview of the main definitions and results concerning LCK geometry. For a complete discussion, see \citep{ornea2024principleslocallyconformallykahler}.

\subsection{Futaki invariant in complex geometry}
Let $M$ be a compact complex manifold with a volume form $\Omega$. On a compact complex manifold, giving a volume form $\Omega$ is equivalent to giving a Hermitian metric $H_\Omega$ on the canonical bundle $K_M$. If $\nu =dz$ is a local section of $K_M$, the Hermitian metric $H_\Omega$ on $K_M$ is given by $H_\Omega(\nu,\nu) := \frac{\nu \wedge \overline{\nu}}{\Omega}$.
The Ricci form $\Ric_\Omega$ of the volume form $\Omega$ is defined as 
$$\Ric_\Omega:=-i\,\Theta\big(K_M,H_\Omega),$$
where $\Theta\big(K_M,H_\Omega)$ denotes the Chern curvature of $K_M$ respect to $H_\Omega$.
In particular, the Ricci form associated with the volume form of a Hermitian metric $\omega$ is the Chern--Ricci curvature of $\omega$, $\textsl{i.e.}$,
$$\Ric_{\omega^n}=\text{CRic}_\omega.$$
If locally $$\Omega \simeq_{\textrm{loc}} a \, idz_1\wedge d\overline{z}_1\wedge\cdots\wedge idz_n\wedge d\overline{z}_n,$$
then the associated Ricci form is given by 
$$\Ric_\Omega \simeq_{\textrm{loc}} -i\,\partial \overline{\partial} \log a.$$
Suppose we have a holomorphic vector field $V \in H^0\big(M, T^{1,0}_M\big)$ on $M$. The divergence of $V$ with respect to the volume form $\Omega$ is defined as 
$$\text{div}_\Omega V = \frac{\mathcal{L}_V \Omega}{\Omega} = \frac{d (\iota_V \Omega)}{\Omega} = \frac{\partial (\iota_V \Omega)}{\Omega},$$
where in the last equality we used that $V$ is holomorphic. By $\mathcal{L}_V \Omega$ we mean the Lie derivative of $\Omega$ along $V$, and by $\iota_V$ we mean the contraction with respect to $V$. Locally, the divergence is given by 
$$\text{div}_\Omega V \simeq_{\textrm{loc}} V(\log a) + \sum_{l=1}^n \frac{\partial V_l}{\partial z_l},$$
where $V \simeq_{\textrm{loc}} \sum_{l=1}^n V_l \frac{\partial}{\partial z_l}$.

\begin{Definition}
   (\citep{FHO}) The $\textbf{Futaki invariant}$ $\text{F}_M$ of $M$ is a map from the holomorphic vector fields on $M$ to $\mathbb{C}$,
   $$\text{F}_M:H^0\big(M, T^{1,0}_M\big) \to \mathbb{C}$$
   defined as 
$$\text{F}_M(V) := \int_M \text{div}_\Omega V \, \Ric_\Omega^n.$$
\end{Definition}
As shown in \citep{FHO}, this definition does not depend on the choice of the volume form $\Omega$ on $M$.\\
The following remark explains in what sense the Futaki invariant represents an obstruction to the existence of a volume form proportional to the determinant of its Ricci form.
\begin{Remark} The following equality holds
\begin{equation}\label{Interpretation}
    \text{F}_M(V) = \int_M \text{div}_\Omega V \, \Ric_\Omega^n = -\int_M V\left(\frac{\Ric_\Omega^n}{\Omega}\right) \Omega.
\end{equation}
Using the properties of the Lie derivative $\mathcal{L}$, we find
\begin{align*}
    V\left(\frac{\Ric_\Omega^n}{\Omega}\right) \Omega &= \iota_V d\left(\frac{\Ric_\Omega^n}{\Omega}\right) \Omega\\ 
    &= \mathcal{L}_V \left(\frac{\Ric_\Omega^n}{\Omega}\right) \Omega-d \,\iota_V \left(\frac{\Ric_\Omega^n}{\Omega}\right) \Omega  \\
    &= \mathcal{L}_V \big(\Ric_\Omega^n\big) - \mathcal{L}_V \Omega\,\frac{\Ric_\Omega^n}{\Omega}  \\
    &= \mathcal{L}_V \big(\Ric_\Omega^n\big) - \text{div}_\Omega V\Ric_\Omega^n  = d \,\iota_V \Ric_\Omega^n - \text{div}_\Omega V\Ric_\Omega^n .
\end{align*}
From the compactness of $M$ and Stokes' theorem, $(\ref{Interpretation})$ follows. 
\end{Remark}

\subsection{LCK manifolds}
\begin{Definition}
    A $\textbf{Locally Conformally Kähler (LCK)}$ manifold $(M,J,\omega,\theta)$ is a complex Hermitian manifold, $\dim_\mathbb{C}(M) > 1$, with a Hermitian metric $\omega$ satisfying $d\omega = \theta \wedge \omega$, where $\theta$ is a closed 1-form, called the $\textbf{Lee form}$ of $M$.
\end{Definition}
There are several equivalent ways to define an LCK manifold. A Hermitian manifold $(M, J, \omega)$ is LCK if there exists an open cover $\{U_i\}_{i \in I}$ of $M$ and smooth functions $f_i: U_i \to \mathbb{R}$ such that $e^{-f_i} \omega_{|_{U_i}}$ is Kähler on $U_i$ for each $i\in I$. Another equivalent definition is as follows: a complex manifold $(M, J)$ is LCK if there exists a Kähler metric $\tilde{\omega}$ on the universal cover $\tilde{M}$ of $M$ such that for each element $\gamma \in \Gamma$ in the deck transformation group, there exists a positive constant $c_\gamma > 0$ such that $\gamma^* \tilde{\omega} = c_\gamma \tilde{\omega}$, \textsl{i.e.}, $\Gamma$ acts on $(\tilde{M}, \tilde{\omega})$ by Kähler homotheties. 
In this situation, a smooth function $f$ on $\tilde{M}$ is called automorphic if for each $\gamma \in \Gamma$, $\gamma^* f = c_\gamma f$.

If the Lee form $\theta$ is exact, with $\theta = df$, then $e^{-f}\omega$ is Kähler and $M$ is called globally conformally Kähler (GCK). We assume $[\theta] \neq 0$ to avoid this situation.
\begin{Definition}
    An LCK manifold $(M, J, g)$ is called a \textbf{Vaisman manifold} if $\nabla \theta = 0$, where $\nabla$ is the Levi--Civita connection of $g$.
\end{Definition}
On a Vaisman manifold, the Lee vector field $\theta^\#$ is a holomorphic vector field, and therefore, for Vaisman manifolds, it is meaningful to compute the Futaki invariant.
\begin{Example}\label{Hopf_example}
A Hopf manifold $H \equiv H_{\gamma}$ is defined as the quotient 
\begin{equation}\label{HOPFMANIFOLDS}
    H := \bigslant{\mathbb{C}^n \setminus \{0\}}{\langle\gamma\rangle},
\end{equation}
where 
$$\gamma: \mathbb{C}^n \to \mathbb{C}^n$$
is a biholomorphic contraction centered at zero. Hopf manifolds are LCK manifolds \citep{Ornea_2023non-linear-Hopf}. If $\gamma \equiv A \in GL(n, \mathbb{C})$ is diagonal, then $H_A$ is Vaisman \citep{diagonalVaisman}.
\end{Example}
We conclude this section by recalling the vanishing result proved in \citep{FHO} for the Futaki invariant. 
\begin{Theorem}\label{Vaisamnvanishingg} (\citep[Theorem 1.2]{FHO})
    The Futaki invariant vanishes on compact Vaisman manifolds.
\end{Theorem}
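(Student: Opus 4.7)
My plan is to exploit the rigidity of compact Vaisman structures together with the metric-independence of $F_M$ to reduce the vanishing to a symmetry/Stokes argument. Concretely, I would take $\Omega=\omega^n$ with $\omega$ the given Vaisman metric, so that $\Ric_\Omega$ coincides with the Chern--Ricci form of $\omega$. The hypothesis $\nabla\theta=0$ forces $|\theta|$ to be a nonzero constant and makes both $\theta^\#$ and $J\theta^\#$ real holomorphic Killing vector fields, so that $M$ carries a canonical (possibly local) holomorphic torus action and the Lee foliation is transversally Sasakian.

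The first substantive step is to derive a closed-form expression for $\Ric_\omega$ in terms of $\theta$, $J\theta$, and $\omega$. I would do this either by passing to the universal cover, where the Vaisman manifold admits an automorphic global Kähler potential (the LCK-with-potential presentation), and tracking the induced $\Gamma$-equivariant trivialization of the canonical bundle, or by using the transverse Sasakian structure on the leaf space of the Lee foliation. The second substantive step is to invoke structural results on $H^0(M,T^{1,0}_M)$ for compact Vaisman manifolds, in the spirit of Kamishima--Ornea: every holomorphic vector field $V$ commutes with the Lee flow and in particular satisfies $\mathcal{L}_V\theta=0$. Combined with the local formula $\text{div}_{\omega^n}V\simeq_{\textrm{loc}}V(\log a)+\sum_l \partial V_l/\partial z_l$ recalled in the preliminaries, this reduces $\text{div}_{\omega^n}V$ to a function that is basic with respect to the Lee foliation.

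Combining these ingredients, I would conclude by one of two routes: either $\text{div}_{\omega^n}V\cdot\Ric_\omega^n$ can be exhibited as an exact $2n$-form so that $F_M(V)=0$ by Stokes' theorem and compactness, or one averages the integrand along the $\theta^\#$-orbits and uses a parity/degree argument to force it to vanish after averaging. The main obstacle I anticipate is the first substantive step: translating the conformal-Kähler local equation $d\omega=\theta\wedge\omega$ into a clean global identity for the curvature of $K_M$ requires careful bookkeeping of the interaction between the parallel Lee form, the Sasakian lift, and the Chern connection. Once that identity is in place, the symmetry and Stokes' arguments in the final step should be comparatively routine.
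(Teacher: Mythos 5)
First, a point of comparison: the paper you are working from does not prove this statement at all --- it is quoted as \citep[Theorem 1.2]{FHO} and used as a black box in the proof of Theorem \ref{vanishingonHopf}. So your proposal must be measured against the argument of the cited source, and against that argument it has a genuine gap. Your starting point is the right one (take $\Omega=\omega^n$ for the Vaisman metric and compute $\Ric_\Omega$), but you stop exactly where the proof actually ends. On a compact Vaisman manifold, the Chern--Ricci form of the Vaisman volume is \emph{basic} with respect to the canonical holomorphic foliation spanned by $\theta^\#$ and $J\theta^\#$: presenting the universal cover as a K\"ahler cone over a Sasakian manifold, $\Ric_\Omega$ comes out as a linear combination of the transverse Ricci form and the transverse K\"ahler form, both of which are pulled back from the local leaf space of that foliation. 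Since the leaf space has complex dimension $n-1$, any basic $2$-form has vanishing $n$-th power, so $\Ric_\Omega^n\equiv 0$ \emph{pointwise}. Consequently
$$\text{F}_M(V)=\int_M \text{div}_\Omega V\,\Ric_\Omega^n = 0$$
for every holomorphic $V$, with no input whatsoever about the structure of $H^0(M,T^{1,0}_M)$, no Stokes argument, and no averaging. The model case is the standard Hopf manifold with $\omega=|z|^{-2}\,i\sum dz_j\wedge d\overline{z}_j$, where $\Ric_\Omega = n\,i\partial\overline{\partial}\log|z|^2$ is the pullback of (a multiple of) the Fubini--Study form under $\mathbb{C}^n\setminus\{0\}\to\mathbb{CP}^{n-1}$, visibly of rank $n-1$.

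The steps you do propose cannot close the argument on their own. The structural fact you invoke (holomorphic vector fields commute with the Lee flow, so $\text{div}_\Omega V$ is basic) is a real theorem, but it is beside the point: knowing that $\text{div}_\Omega V$ is basic tells you nothing about the top-degree form $\text{div}_\Omega V\cdot\Ric_\Omega^n$ unless you already control $\Ric_\Omega^n$. Both of your proposed endgames are unjustified as stated: there is no a priori reason the integrand is exact, and averaging along the $\theta^\#$-orbits is a no-op precisely because step 2 makes the integrand Lee-invariant, while no parity argument is available to force the average to vanish. In short, you correctly identified that the crux is a curvature identity for Vaisman metrics, but the proof lives entirely inside that identity --- it forces $\Ric_\Omega^n=0$ pointwise, making the theorem trivial --- and without carrying it out, the remaining machinery you assemble does not yield the conclusion.
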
    
\begin{Remark}
    In \citep{FHO}, through a direct computation, it is shown that on a blow-up of a diagonal Hopf surface at a point, the Futaki invariant is not zero, and thus, there are examples of LCK non-Vaisman manifolds (see \citep{ornea2011blowupslocallyconformallykahler}) with non-vanishing Futaki invariant.
\end{Remark}

\section{Vanishing of the Futaki invariant on Hopf manifolds}
This section is devoted to the proof of Theorem \ref{vanishingonHopf}. It is organized as follows. We begin by rewriting a general Hopf manifold in a simpler form. Then, we study its holomorphic vector fields and conclude with the proof of the theorem.\\

Let $H \equiv H_{\gamma}$ be a Hopf manifold. We rewrite $H \equiv H_{\gamma}$ in a more manageable form for our purpose. Following \citep[Appendix A]{H} and using the Poincaré--Dulac theorem \citep[p.181]{P}, we can assume that 
$\gamma$ is a $\mu$-resonant biholomorphic polynomial map whose derivative $\gamma'(0)$ at zero has eigenvalues 
$$\mu = (\mu_1, \mu_2, \ldots, \mu_n)$$
with 
$$0 < |\mu_1| \leq |\mu_2| \leq \ldots \leq |\mu_n| < 1.$$
Recall that a $\mu$-resonant (multiplicative) monomial is a polynomial map from $\mathbb{C}^n$ to 
$\mathbb{C}^n$ of the form 
$$z \mapsto az^m e_s,$$
such that 
$$\mu_s = \mu^m \equiv \mu_1^{m_1} \mu_2^{m_2} \ldots \mu_n^{m_n},$$
where 
$m = (m_1, m_2, \ldots, m_n) \in \mathbb{N}^n$, $a \in \mathbb{C}$, and $\{e_s\}_s$ is the canonical 
basis of $\mathbb{C}^n$. A $\mu$-resonant polynomial map $f: \mathbb{C}^n \to \mathbb{C}^n$ is a sum of 
$\mu$-resonant monomials.\\
We can assume that $\gamma$ has a normal form 
$$\gamma(z) = \big(\gamma_1(z), \gamma_2(z), \ldots, \gamma_n(z)\big),$$
 where 
$$\gamma_j(z) = \mu_j z_j + P_j(z_{j+1}, \ldots, z_n)$$ 
and 
$$P_j(z_{j+1}, \ldots, z_n)=\sum_{m}a_{mj}z^m$$
is a polynomial in the variables $z_{j+1}, \ldots, z_n$.\\
\begin{Lemma}
    Up to biholomorphism, we can suppose that if $|z| = 1$, then $|\gamma(z)| < 1$.
\end{Lemma}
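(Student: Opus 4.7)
The plan is to conjugate $\gamma$ by a linear biholomorphism of $\mathbb{C}^n$, combining two ingredients: a linear change of coordinates making the differential $\gamma'(0)$ a strict contraction in the Euclidean operator norm, together with a rescaling that kills the contribution of the higher-order terms on the unit sphere. Note first that in the normal-form coordinates $\gamma'(0)$ is upper triangular with diagonal entries $\mu_1,\ldots,\mu_n$ (all of modulus $<1$) and off-diagonal entries only at positions $(j,k)$ with $k>j$ and $\mu_j=\mu_k$; in particular its spectral radius equals $|\mu_n|<1$.

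First I would choose $\phi_1 \in GL(n,\mathbb{C})$ such that $A := \phi_1^{-1}\gamma'(0)\phi_1$ satisfies $\rho := \|A\|_{\mathrm{op}} < 1$ in the standard Euclidean norm. This rests on the classical fact that any matrix of spectral radius $<1$ becomes a strict contraction in a suitable Hermitian inner product, which can be produced explicitly as $\|v\|^2 := \sum_{k\geq 0} r^{-2k}|\gamma'(0)^k v|^2$ for any chosen $r \in (|\mu_n|,1)$; one then takes $\phi_1$ to be the linear isomorphism carrying this Hermitian product to the Euclidean one. Next I would set $\gamma_1 := \phi_1^{-1}\circ \gamma \circ \phi_1$ and Taylor expand as $\gamma_1(z) = Az + R(z)$ with $|R(z)|\leq C|z|^2$ on the closed unit ball. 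Finally I would rescale by $\phi_\lambda(z) := \lambda z$ for $\lambda>0$ small and set $\tilde\gamma := \phi_\lambda^{-1}\circ \gamma_1 \circ \phi_\lambda$. A one-line estimate gives, for $|z|=1$,
\[
|\tilde\gamma(z)| = \lambda^{-1}\bigl|A(\lambda z) + R(\lambda z)\bigr| \leq \rho + C\lambda,
\]
which is $<1$ for $\lambda$ small enough. Since $\phi_1\circ\phi_\lambda$ is a linear biholomorphism of $\mathbb{C}^n$ fixing the origin, it descends to a biholomorphism $H_\gamma \cong H_{\tilde\gamma}$, and $\tilde\gamma$ has the required property.

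I expect the only delicate step to be the construction of $\phi_1$. A naive rescaling alone cannot suffice, because scaling only dampens the higher-order remainder $R$ and leaves the linear part untouched, while $\gamma'(0)$ can easily have Euclidean operator norm $\geq 1$ despite having all eigenvalues in the open unit disk (this happens precisely when the resonant off-diagonal entries of $\gamma'(0)$ are large). The adapted-norm construction is exactly what converts the spectral-radius information into an honest contraction estimate on the Euclidean unit sphere; once it is in place, the rescaling by $\lambda$ handles the nonlinear remainder in a purely routine way.
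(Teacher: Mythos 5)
Your argument is correct as a proof of the literal statement, and it takes a genuinely different route from the paper's. The paper introduces no adapted Hermitian norm: it conjugates by the single anisotropic dilation $d_t(z)=(tz_1,t^2z_2,\dots,t^nz_n)$ and exploits the triangular structure of the Poincar\'e--Dulac normal form. Every monomial $a_{mj}z^m$ of $P_j$ --- including the resonant \emph{linear} off-diagonal entries at positions $(j,k)$ with $k>j$ that motivate your $\phi_1$ --- involves only the variables $z_{j+1},\dots,z_n$, so conjugation by $d_t$ multiplies its coefficient by $t^{\,j-\sum_{l\ge j+1}lm_l}$ with strictly negative exponent. Hence all off-diagonal terms, linear and nonlinear alike, are damped simultaneously as $t\to\infty$, leaving $\sum_j|\mu_j|^2|z_j|^2\le|\mu_n|^2$ plus an $O(1/t)$ error on the unit sphere. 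Your observation that isotropic rescaling alone cannot control the linear part is correct, but the anisotropic weights $t^j$ are precisely the device that circumvents it without any auxiliary inner product; your construction trades that for the classical adapted-norm lemma, which is more general in that it needs no normal form at all.

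That generality, however, is exactly where your proof fails to mesh with the rest of the paper, and in context this is a genuine gap. Immediately after the lemma, $\gamma$ is \emph{replaced} by the conjugated map, its ``diagonal part'' $d_\gamma(z)=(\mu_1z_1,\dots,\mu_nz_n)$ is extracted, and Lemma \ref{Vectorfields} together with the decomposition $g_\mu\oplus g_\mu^\perp$ all require the new $\gamma$ to still be a $\mu$-resonant polynomial of the form $\gamma_j=\mu_jz_j+P_j(z_{j+1},\dots,z_n)$. Conjugation by $d_t$ preserves this shape (it only rescales the coefficients $a_{mj}$), whereas your $\phi_1$ is an essentially arbitrary invertible matrix: $A=\phi_1^{-1}\gamma'(0)\phi_1$ is in general neither diagonal nor triangular (and cannot be diagonalized at all when a resonance $\mu_j=\mu_k$ produces a nontrivial Jordan block), and the nonlinear part of $\tilde\gamma$ loses its resonant triangular structure. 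So either you must rerun the normal-form reduction after your conjugation --- and then verify that the contraction property on the sphere survives that second change of coordinates, which is not automatic --- or you should replace $\phi_1\circ\phi_\lambda$ by the anisotropic dilation, which secures the contraction estimate and the normal form in one step.
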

\begin{proof}
    If $\gamma$ is linear and diagonal, there is nothing to prove. We may therefore assume that we are not in this case. If $t \in \mathbb{C}^*$ 
is fixed, the biholomorphism of $\mathbb{C}^n$ given by 
$$d_t(z) = \big(t z_1, t^2 z_2, \ldots, t^n z_n\big)$$
induces a biholomorphism 
\begin{equation}\label{d_t}
    d_t: \bigslant{\mathbb{C}^n \setminus \{0\}}{\langle\gamma\rangle}\to\bigslant{\mathbb{C}^n \setminus \{0\}}{\langle d_t \gamma d_t^{-1}\rangle}
\end{equation}
since $d_t(\gamma) = (d_t \gamma d_t^{-1}) d_t$. We have that 
\begin{align*}
    \big(d_t \gamma d_t^{-1}(z)\big)_j &= t^j \gamma_j\left(\frac{z_1}{t}, \frac{z_2}{t^2}, \ldots, \frac{z_n}{t^n}\right) \\
    &= \mu_j z_j + \sum_{m} a_{mj} \frac{t^j}{t^{\sum_{l=j+1}^n l m_l}} z^m,
\end{align*}
where for each $m$ in the sum, there exists an $m_{\tilde{l}} \neq 0$ with $\tilde{l} \geq j+1$, 
and thus $\sum_{l=j+1}^n l m_l - j > 0$.
Suppose that $z \in \mathbb{S}^{2n+1}$ and $t \gg 1$ is a real number. Then  
\begin{align*}
    \big|d_t \gamma d_t^{-1}(z)\big|^2 &\leq \sum_{j=1}^n |\mu_j|^2 |z_j|^2 + \frac{1}{t} A(z) \\
    &\leq |\mu_n|^2+\frac{C}{t}< 1,
\end{align*}
where $A$ is a smooth function with real values and $C:=\max_{\mathbb{S}^{2n+1}}|A|$.
\end{proof}
As in the above lemma, by compactness and analogous computations, we deduce that, possibly by taking $t$ larger, we can find a fundamental domain $D \subset \mathbb{C}^n \setminus \{0\}$ for the action of $\gamma :=d_t \gamma d_t^{-1}$ such that there exists an open set $U \subset D$ with the following property,
\begin{equation*}
    \Gamma(c):=\big\{z \in \mathbb{C}^n : |z| \in [c,1]\big\} \subsetneq U \subsetneq \overline{U} \subsetneq D \setminus \partial D,
\end{equation*} 
for some real number $c<1$ (ideally we take $1-c\ll 1$).\\
The boundary of $D$ consists of the two connected components $D_1 \sqcup D_2$. We denote by $D_1$ the component inside the ball $\mathbb{B}(0,c)$, and by $D_2$ the other one.
In what follows, we fix $t$ with the above property.

The following lemma gives us information about the holomorphic vector fields of $H$ and it will be crucial in the proof of Theorem \ref{vanishingonHopf}. It is proved in \citep{H}. Instead of giving a full proof, we outline the main ideas.
\begin{Lemma}\label{Vectorfields}(\citep[A.4]{H})
    Let
    $$d_\gamma(z) := (\mu_1 z_1, \mu_2 z_2, \ldots, \mu_n z_n)$$
    be the diagonal part of $\gamma$. Then
\begin{equation}\label{inclusion} 
    H^0\big(H_\gamma, T^{1,0}_{H_\gamma}\big) \subset H^0\big(H_{d_\gamma}, T^{1,0}_{H_{d_\gamma}}\big),
\end{equation}
\textsl{i.e.}, each holomorphic vector field on $H_\gamma$ is a holomorphic vector field on $H_{d_\gamma}$.
\end{Lemma}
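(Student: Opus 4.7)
My plan is to lift $V$ to a $\gamma$-equivariant holomorphic vector field on $\mathbb{C}^n\setminus\{0\}$, extend it across the origin by Hartogs, and then show by induction on the total degree of the Taylor expansion that the extended vector field is supported only on $\mu$-resonant monomials, which is exactly the condition for $d_\gamma$-equivariance. The $\mu$-resonant structure of the Poincaré--Dulac normal form is the crucial algebraic input.

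First I would pull back $V$ to a holomorphic vector field $\tilde V = \sum_j V^j \partial_{z_j}$ on $\mathbb{C}^n\setminus\{0\}$ satisfying the $\gamma$-equivariance relation
$$V^j(\gamma(z)) = \sum_{k=1}^n \frac{\partial \gamma_j}{\partial z_k}(z)\, V^k(z),\qquad j=1,\dots,n.$$
Because $n\geq 2$, Hartogs's theorem extends each $V^j$ holomorphically across the origin, so $V^j(z) = \sum_{m\in\mathbb{N}^n} c_{mj}\, z^m$ is a convergent Taylor series. The $d_\gamma$-equivariance condition $V^j(d_\gamma z) = \mu_j V^j(z)$ translates into the purely algebraic requirement $c_{mj}\neq 0 \Rightarrow \mu^m = \mu_j$, so the problem reduces to showing that every non-resonant Taylor coefficient vanishes.

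The key observation is that both the diagonal term $\mu_i z_i$ and the polynomial tail $P_i$ of $\gamma_i$ are sums of monomials of $\mu$-weight $\mu_i$; hence each factor $(\mu_i z_i + P_i(z))^{m_i}$, and therefore also $\prod_i(\mu_i z_i + P_i(z))^{m_i}$, is a sum of monomials of weight $\mu^m$. I then proceed by induction on $d=|m|$, assuming that every $c_{m'k}$ with $|m'|<d$ vanishes unless $\mu^{m'}=\mu_k$. Substituting the Taylor expansions into the $\gamma$-equivariance and isolating the degree-$d$ part yields
$$\sum_{|m|=d} c_{mj}(\mu^m - \mu_j)\, z^m \;=\; (\text{corrections built from } c_{m'k}\text{ with }|m'|<d),$$
and by the inductive hypothesis together with the weight-tracking observation, both sides are sums of monomials of weight $\mu_j$. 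In particular the right-hand side has no component along any non-resonant monomial $z^m$ (with $\mu^m\neq\mu_j$), so matching coefficients forces $c_{mj}=0$ whenever $|m|=d$ and $\mu^m\neq\mu_j$. For the base case $d=0$, evaluating at $z=0$ gives $(1-\mu_j)c_{0j}=\sum_{k>j}\frac{\partial P_j}{\partial z_k}(0)\,c_{0k}$, and a descending induction in $j$ (starting from $j=n$, where $1-\mu_n\neq 0$) yields $c_{0j}=0$ for all $j$.

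Consequently every $V^j$ is supported on $\mu$-resonant monomials, so $\tilde V$ is automatically $d_\gamma$-equivariant and descends to the diagonal Hopf manifold $H_{d_\gamma}$, giving \eqref{inclusion}. The main obstacle I expect is the weight-tracking in the inductive step: one must verify carefully that the non-linear tails $P_i$ do not introduce monomials of weight other than $\mu_i$, which is precisely the defining feature of the Poincaré--Dulac normal form. This is the argument carried out in \citep[A.4]{H}.
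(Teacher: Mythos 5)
Your proposal is correct and follows essentially the same route as the paper: the paper's outline splits $H^0\big(\mathbb{C}^n, T^{1,0}_{\mathbb{C}^n}\big)$ into the $\mu$-resonant part $g_\mu$ and its complement $g_\mu^\perp$, notes that $\mathrm{Id}-\gamma_*$ preserves both and is injective on $g_\mu^\perp$, and your Hartogs extension plus degree-by-degree Taylor induction is exactly a hands-on proof of those two facts, resting on the same weight-preservation property of the Poincar\'e--Dulac normal form. The one step to tighten is ``matching coefficients forces $c_{mj}=0$'': since the normal form permits linear resonant terms (monomials $z_k$ in $P_j$ when $\mu_k=\mu_j$), the degree-$d$ part of the equivariance couples distinct monomials of equal degree and equal weight, so within each such block you still need the triangularity of the induced action of $\gamma'(0)$ to conclude that the non-resonant coefficients vanish, in the same way as the descending induction in $j$ that you use for the base case $d=0$.
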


Recall that  
$$H^0\big(H_\gamma, T^{1,0}_{H_\gamma}\big) = \big\{V \in H^0\big(\mathbb{C}^n, T^{1,0}_{\mathbb{C}^n}\big) \mid \gamma_* V = V\big\}.$$  
We denote by $ g_\mu $ the Lie subalgebra of the Lie algebra of holomorphic vector fields on $ \mathbb{C}^n $, consisting of vector fields 
$$V = \sum a^s_m z^m \frac{\partial}{\partial z_s},$$
such that for each monomial, we have $ \mu_s = \mu^m $ (these vector fields are called multiplicatively $\mu$-resonant).
Observe that, $g_\mu$ coincides with the holomorphic vector fields that are $d_{\gamma}$-invariant, 
\textsl{i.e.},  
$$g_\mu = H^0\big(H_{d_\gamma}, T^{1,0}_{H_{d_\gamma}}\big).$$  
Let $g_\mu^\perp$ be the set of $V \in H^0\big(\mathbb{C}^n, T^{1,0}_{\mathbb{C}^n}\big)$ of the form  
$$\sum a^s_m z^m \frac{\partial}{\partial z_s},$$
where each monomial $a^s_m z^m$ is not $\mu$-resonant, \textsl{i.e.}, $\mu_s\ne\mu^m$.  
In \citep[A.4]{H}, it is shown that  
\begin{equation}\label{gmu}
    \text{Id} - \gamma_*(g_\mu) \subset g_\mu
\end{equation}
and that  
\begin{equation}\label{gmuiso}
    \text{Id} - \gamma_* : g_\mu^\perp \to g_\mu^\perp
\end{equation}
is an isomorphism of $g_\mu^\perp$. The $\mu$-resonant condition implies that up to complex multiples, there is only a finite number of $\mu$-resonant monomials; hence, $g_\mu$ has a finite dimension as a complex vector space. In particular, we have
$$H^0\big(\mathbb{C}^n, T^{1,0}_{\mathbb{C}^n}\big) = g_\mu \oplus g_\mu^\perp.$$
We deduce that if $\gamma_* V = V$, then $V \in g_\mu$.\\
\begin{Remark}
    We did not use the isomorphism given in $(\ref{gmuiso})$ but only property $(\ref{gmu})$, the fact that $\text{Id} - \gamma_*(g_\mu^\perp) \subset g_\mu^\perp$, and that $\text{Id} - \gamma_* \big|_{g_\mu^\perp}$ is injective, all properties that can be verified directly.
\end{Remark}

\begin{Theorem}\label{vanishingonHopf}
    The Futaki invariant vanishes on Hopf manifolds.
\end{Theorem}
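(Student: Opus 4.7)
The proof strategy is to reduce the Futaki invariant on the (possibly non-Vaisman) Hopf manifold $H_\gamma$ to that on the diagonal Hopf manifold $H_{d_\gamma}$, which is Vaisman by Example \ref{Hopf_example} and therefore has vanishing Futaki invariant by Theorem \ref{Vaisamnvanishingg}. The bridge is provided by Lemma \ref{Vectorfields}: any $V \in H^0\big(H_\gamma, T^{1,0}_{H_\gamma}\big)$ lies in $g_\mu = H^0\big(H_{d_\gamma}, T^{1,0}_{H_{d_\gamma}}\big)$, so the same $V$ also defines a holomorphic vector field on $H_{d_\gamma}$. The plan is then to establish the identity $\text{F}_{H_\gamma}(V) = \text{F}_{H_{d_\gamma}}(V)$, after which Theorem \ref{Vaisamnvanishingg} yields vanishing.

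To compare the two invariants, I would compute both as integrals over fundamental domains in the universal cover $\mathbb{C}^n \setminus \{0\}$, exploiting the fact that $\text{F}$ is independent of the chosen volume form. A useful structural observation is that, in the Poincaré--Dulac normal form, $\gamma'(z)$ is upper triangular with constant diagonal $(\mu_1,\ldots,\mu_n)$, so $\det \gamma' = \mu_1 \cdots \mu_n = \det d_\gamma'$. This makes it possible to construct a $\gamma$-invariant volume form $\Omega_\gamma$ and a $d_\gamma$-invariant volume form $\Omega_d$ on the cover that are related by a smooth conformal factor $\Omega_\gamma = e^f \Omega_d$. Writing both invariants over the fundamental domain $D$ of the preliminaries and expanding via $\text{div}_{\Omega_\gamma} V = \text{div}_{\Omega_d} V + V(f)$ together with $\Ric_{\Omega_\gamma} = \Ric_{\Omega_d} - i \partial \overline{\partial} f$, I would aim to show that all extra terms assemble into an exact $2n$-form. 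Stokes' theorem on $D$ would then reduce the difference to boundary contributions on $\partial D = D_1 \sqcup D_2$, which cancel by the $\gamma$-invariance of the integrand under the identification $\gamma: D_1 \to D_2$.

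The main obstacle is that $D$ is a fundamental domain for $\gamma$ but not for $d_\gamma$, so the two integrals do not a priori live on the same region, and the boundary cancellation sketched above is not automatic on the $d_\gamma$-side. My strategy for overcoming this is to leverage the $\mu$-resonant structure of the problem: since $g_\mu$ is finite-dimensional, $V$ decomposes as a finite sum of monomial fields $z^m \tfrac{\partial}{\partial z_s}$ with $\mu_s = \mu^m$, and the nonlinear perturbation $\gamma - d_\gamma$ consists of $\mu$-resonant polynomials $P_j(z_{j+1},\ldots,z_n)$; a direct check using $\mu_s = \mu^m$ also shows that $\gamma$ and $d_\gamma$ commute on $\mathbb{C}^n$. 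The algebraic identities encoded in these resonance conditions should force the nonlinear contributions to the integrand to be exact on $D$, collapsing the whole discrepancy to the diagonal piece $\text{F}_{H_{d_\gamma}}(V) = 0$. Making this step rigorous—tracking the precise combinatorics of the $\mu$-resonant monomials and verifying that the resulting boundary terms pair up correctly across $D_1$ and $D_2$—is the hardest part of the proof.
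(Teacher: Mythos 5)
Your high-level strategy is the same as the paper's: use Lemma \ref{Vectorfields} to view every holomorphic vector field on $H_\gamma$ as one on the diagonal Hopf manifold $H_{d_\gamma}$, prove $\text{F}_{H_\gamma}(V)=\text{F}_{H_{d_\gamma}}(V)$, and conclude by Theorem \ref{Vaisamnvanishingg}. But the bridge you propose — writing $\Omega_\gamma = e^f\,\Omega_d$ for two invariant volume forms, expanding $\text{div}$ and $\Ric$, and hoping the discrepancy assembles into an exact form killed by Stokes — has a genuine gap exactly where you flag it. The comparison data ($f$, $\Omega_d$, $\Ric_{\Omega_d}$) is $d_\gamma$-invariant but not $\gamma$-invariant, so the boundary terms on $D_1$ and $D_2$ do \emph{not} pair up under the identification $\gamma\colon D_1\to D_2$; moreover $D$ is not a fundamental domain for $d_\gamma$, so even after a successful Stokes argument you would not have produced $\text{F}_{H_{d_\gamma}}(V)$. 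The appeal to ``resonance combinatorics forcing exactness'' (and the correct but ultimately unused observation that $\gamma$ and $d_\gamma$ commute) is not a mechanism — there is no identifiable reason why the non-invariant remainder should be exact on $D$ with cancelling boundary contributions, and you do not supply one.

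The paper's proof avoids this entirely by a different, and much more elementary, device: it \emph{chooses} the volume forms rather than comparing arbitrary invariant ones. Starting from a single cut-off function $\psi$ supported (in the sense that it is non-constant only) on a spherical shell $\Gamma(c)=\{c\le|z|\le 1\}$ contained in both a fundamental domain $D$ for $\gamma$ and a fundamental domain $\tilde D$ for $d_\gamma$, it propagates $\psi$ by the respective group actions to get invariant densities $f$ and $\tilde f$ with $f\equiv\tilde f$ on $\Gamma(c)$ and both locally constant off $\Gamma(c)$. Then $\Ric_{\Omega(f)}=-i\,\partial\overline\partial\log f$ vanishes outside $\Gamma(c)$, so both Futaki integrals collapse to the \emph{same} integral over $\Gamma(c)$, and the equality $\text{F}_{H_\gamma}(V)=\text{F}_{H_{\tilde H}}(V)$ is immediate — no Stokes argument, no exactness, no boundary matching. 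This localization of the Ricci form to a common annulus is the key idea missing from your proposal; without it (or a completed substitute for it), the proof is not done.
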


\begin{proof}
To compute the Futaki invariant, we need to find holomorphic vector fields and a volume form on $H$. Lemma \ref{Vectorfields} provides the necessary ingredients concerning the holomorphic vector fields. We begin the proof by constructing an $\textsl{ad hoc}$ volume form for $H$.

In what follows, we use the notation 
$$dz \wedge d\overline{z} = i^n \, dz_1 \wedge d\overline{z}_1 \wedge \ldots \wedge  \, dz_n \wedge d\overline{z}_n.$$
Observe that a volume form $v = h \, dz \wedge d\overline{z}$ on $\mathbb{C}^n \setminus \{0\}$ is well-defined on $H$ if and only if it is $\gamma$-invariant, \textsl{i.e.}, $\gamma^*v = v$, and then if and only if 
\begin{equation}\label{volumecondition}
    h\big(\gamma(z)\big)|\mu_1\mu_2\ldots\mu_n|^2 = h(z)\,\,\,\forall\,z\in\mathbb{C}^n \setminus \{0\}.
\end{equation}
Thus, finding a volume form on $H$ is equivalent to finding a smooth function on $\mathbb{C}^n\setminus\{0\}$ satisfying the equivariance relation $(\ref{volumecondition})$.\\
As a first step, we construct a cut-off function $\psi$ on $D$, constant on each component of the complement of $\Gamma(c)$, such that $$\psi(D_1)|\mu_1\mu_2\ldots\mu_n|^2 = \psi(D_2).$$ 
Since $\mathbb{C}^n\setminus\{0\}=\sqcup_{i\in\mathbb{Z}}D_i$, where $D_0$ is $D$ and $D_i=\gamma^{i}(D_0)$, we will be able to replicate $\psi$ on each $D_i$ by the action of $\mathbb{Z}$ and define a global function $f$ satisfying $(\ref{volumecondition})$. We define $\psi$ as
$$\psi(z) := v -g\left(\frac{\|z\|^2 - c^2}{1-c^2 }\right),
$$
where (as in \citep[p. 143]{Tu})
$$
g(t) := \frac{\phi(t)}{\phi(t) + \phi(1-t)}, \quad \phi(t) := 
\begin{cases} 
e^{-\frac{1}{t}}, & t > 0, \\
0, & t \leq 0,
\end{cases}
$$
and $v>1$ is a real number such that 
\begin{align*}
    \frac{v-1}{v}=|\mu_1\mu_2\ldots\mu_n|^2.
\end{align*}
Once we have this function $\psi$, by its definition, we can define a positive smooth function $f$ on $\mathbb{C}^n \setminus \{0\}$ via $\mathbb{Z}\gamma^*$ that satisfies $(\ref{volumecondition})$ as follows:  
$$f\big(\gamma^k(z)\big) := \left( \frac{1}{|\mu_1\mu_2\ldots\mu_n|^2}\right)^k\psi(z) \quad \forall \,z \in D.$$  
We emphasize that $f \equiv \psi$ on $D$. By construction, $f \, dz \wedge d\overline{z}$ provides a volume form on $H$.\\

To determine the Futaki invariant on $H$, we use the fact that it is zero on diagonal Hopf manifolds. Diagonal Hopf manifolds are Vaisman (Example \ref{Hopf_example}) and the Futaki invariant on Vaisman compact manifolds vanishes (Theorem \ref{Vaisamnvanishingg}). The strategy will be to compare the Futaki invariant of $\mathbb{C}^n\setminus\{0\}/\gamma$ to the one of $\mathbb{C}^n\setminus\{0\}/d_{\gamma}$, which we know vanishes. To this aim, we find a spherical shell included in both fundamental domains and we construct the volume forms $f dz \wedge d\overline{z}$ and $\tilde{f} dz \wedge d\overline{z}$ in such a way that $f$ coincides with $\tilde{f}$ on an annulus $\Gamma(c)$ as described above. Consider therefore the diagonal Hopf manifold $\tilde{H}$ given by
$$\tilde{H} := \bigslant{\mathbb{C}^n \setminus \{0\}}{\langle d_\gamma \rangle}.$$
As for $H$, the condition 
$$h\big(d_\gamma(z)\big) |\mu_1\mu_2\ldots\mu_n|^2 = h(z)$$
defines a volume form on $\tilde{H}$ and we consider a fundamental domain $\tilde{D}$ such that there exists an open set $\tilde{U} \subset \tilde{D}$ with the following property:  
$$
\Gamma(c) = \big\{z \in \mathbb{C}^n : |z| \in [c,1]\big\} \subsetneq \tilde{U}  \subsetneq \overline{\tilde{U}} \subsetneq \tilde{D} \setminus \partial \tilde{D},
$$  
where $1-c$ is taken sufficiently small.  
We then construct a volume form $\tilde{f} \, dz \wedge d\overline{z}$ for $\tilde{H}$ starting from the function $\psi$, as done for $H$.
The crucial point is the following. By construction, 
\begin{equation}\label{ff}
    f \equiv \tilde{f}\,\text{    on    }\, \Gamma(c) \subset D \cap \tilde{D},
\end{equation}
and outside $\Gamma(c)$, $f$ and $\tilde{f}$ are constant in each connected component of $ D \setminus \Gamma(c) $ and $ \tilde{D} \setminus \Gamma(c) $, respectively. Notice that despite $f$ and $\tilde{f}$ coincide on $\Gamma(c)$, they are different as global functions, since $\gamma\big(\Gamma(c)\big)\neq d_{\gamma}\big(\Gamma(c)\big)$.\\
From what has been said above, each holomorphic vector field on $H$ is also a holomorphic vector field on $\tilde{H}$. In conclusion, if we consider the volume forms $\Omega(f) := f \, dz \wedge d\overline{z}$ on $H$ and $\Omega(\tilde{f}) := \tilde{f} \, dz \wedge d\overline{z}$ on $\tilde{H}$ respectively, then by $f$ being constant outside $\Gamma(c)$ and by relation $(\ref{ff})$, the Futaki invariant $\text{F}_H$ on $H$ is given by
\begin{align*}
    \text{F}_H(V) &= \int_D \text{div}_{\Omega(f)}(V) \, \left(\text{Ric}_{\Omega(f)}\right)^n\\
    &=(-i)^n \int_D \text{div}_{\Omega(f)}(V) \, \big(\partial \overline{\partial} (\log f)\big)^n \\
    &= (-i)^n\int_{\Gamma(c)} \text{div}_{\Omega(f)}(V) \, \big(\partial \overline{\partial} (\log f)\big)^n \\
    &=(-i)^n \int_{\Gamma(c)} \text{div}_{\Omega(\tilde{f})}(V) \, \big(\partial \overline{\partial} (\log \tilde{f})\big)^n \\
    &=(-i)^n \int_{\tilde{D}} \text{div}_{\Omega(\tilde{f})}(V) \, \big(\partial \overline{\partial} (\log \tilde{f})\big)^n\\
    &= \text{F}_{\tilde{H}}(V) = 0
\end{align*}
for each holomorphic vector field $V$ on $H$, and then the Futaki invariant on $H$ is zero.
\end{proof}

\section*{Acknowledgements}
I am grateful to Alexandra Otiman for first pointing out this problem to me, carefully reading the manuscript, and providing useful suggestions. I also thank Liviu Ornea and Cristiano Spotti for their helpful remarks.

\bibliographystyle{alpha}
\bibliography{bibliografia}

\vspace*{4\baselineskip}
\noindent Giacomo Perri\\
\noindent  \textsc{Institut for Matematik, Aarhus University\\ 8000, Aarhus C, Denmark}\\
\noindent  \textit{ Email address:} \texttt{g.perri@math.au.dk}
\end{document}